\theoremstyle{plain}
\newtheorem{theorem}{Theorem}
\theoremstyle{definition}
\newcommand{\Z}{{\mathbb Z}}
\newcommand{\R}{{\mathbb R}}
\begin{document}

\title{A new proof of the asymptotic limit of the $L_p$ norm of the Sinc function}
\author{
 Ron Kerman and Susanna Spektor}

\address{ Ron Kerman, Brock University,
500 Glenridge Avenue,
St. Catharines, ON,
L2S 3A1, Canada}
\email{ rkerman@brocku.ca}

\address{ Susanna Spektor
University of Alberta,
632 Central Academic Building,
Edmonton, AB,
T6G 2G1, Canada}
\email{sanaspek@gmail.com}

\subjclass[2000]{ 41A60, 42A20, 42C40, 52A99}

\keywords{ Ball's integral inequality, symmetric $B$-splines.}
\thanks{}
\date{}
\maketitle

\thispagestyle{empty}

\begin{abstract}

We improve on the inequality $\displaystyle{\frac{1}{\pi}\int_{-\infty}^{\infty} \left(\frac{\sin^2 t}{t^2}\right)^pdt\leq \frac{1}{\sqrt p},  \hspace{0.2 cm}p\geq 1,}$ showing that $\displaystyle{\frac{1}{\pi}\int_{-\infty}^{\infty} \left(\frac{\sin^2 t}{t^2}\right)^pdt\leq C(p) \frac{\sqrt{3/\pi}}{\sqrt p},}$ with $\displaystyle{\lim_{p\longrightarrow \infty} C(p)=1,}$ and indeed that
\begin{align*}
\displaystyle{\lim_{p\longrightarrow \infty}\frac{1}{\pi}\int_{-\infty}^{\infty} \left(\frac{\sin^2 t}{t^2}\right)^pdt/ \frac{\sqrt{3/\pi}}{\sqrt p}=1.}
\end{align*}
\end{abstract}

\setcounter{page}{1}

%==============================================================================
%==============Introduction==================================================
%==============================================================================

\section{Introduction}

To prove by probabilistic methods that every $(n-1)$-dimensional section of the unit cube in $\R^n$ has volume at most $\sqrt 2$, Ball \cite{Ball:1986} made essential use of the inequality
\begin{equation}\label{ball's ineq}
\frac{1}{\pi}\int_{-\infty}^{\infty} \left(\frac{\sin^2 t}{t^2}\right)^pdt\leq \frac{1}{\sqrt p}, \quad p\geq 1,
\end{equation}
 in which equality holds if and only if $p=1$.

 As we will see in the Theorem 2 below, the right side of (\ref{ball's ineq}) has the correct rate of decay though the limit of the ratio of the right to left side is $\displaystyle{\sqrt{\frac{3}{\pi}}}$ rather then $1$. With this in mind, we apply Ball's methods to obtain the following improved form of (\ref{ball's ineq}).

\begin{theorem}
Let
\begin{align*}
C(p):=
\left\{ \begin{array}{rcl}
\sqrt{\frac{\pi}{3}},&\quad 1\leq p \leq p_0\\
\\
1+\frac{1}{\sqrt{3 \pi}}\frac{\left(\sqrt 5/6\right)^{2p-1}}{\sqrt p-1/2 \sqrt p},&\quad p> p_0,
\end{array}\right.
\end{align*}
where
\begin{align*}
\frac{\left(\sqrt 5/6\right)^{2p_0-1}}{\sqrt p_0-1/2 \sqrt p_0}=\left(1-\sqrt{3/{\pi}}\right) \pi,
\end{align*}
so that $p_0=1.8414$ to $4$ D.

Then,
\begin{equation}\label{ineq theorem}
\frac{1}{\pi}\int_{-\infty}^{\infty} \left(\frac{\sin^2 t}{t^2}\right)^pdt\leq C(p) \frac{\sqrt{3/\pi}}{\sqrt p}\leq \frac{1}{\sqrt p}, \quad p\geq 1.
\end{equation}
The first two terms being equal if and only if $p=1$.

Further,
\begin{align}
\lim_{p\longrightarrow \infty}\frac{1}{\pi}\int_{-\infty}^{\infty} \left(\frac{\sin^2 t}{t^2}\right)^pdt/ \frac{\sqrt{3/\pi}}{\sqrt p}=1.
\end{align}
\end{theorem}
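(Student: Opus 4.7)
My plan is to adapt Ball's original argument. Write $I(p) := \pi^{-1}\int_{-\infty}^{\infty}(\sin^{2}t/t^{2})^{p}\,dt$ and set $t_{0} := 6/\sqrt{5}$; the idea is to split the integral at $t_{0}$ into a Gaussian-dominated core and a polynomially decaying tail. The central analytic input is the pointwise inequality
\[
\left(\frac{\sin t}{t}\right)^{2} \leq e^{-t^{2}/3}, \qquad |t| \leq t_{0},
\]
which I would establish by showing $g(t) := \log(\sin t/t) + t^{2}/6 \leq 0$ on $(-\pi,\pi)$. From the Euler product $\sin t/t = \prod_{k\geq 1}(1 - t^{2}/(k\pi)^{2})$ and the identity $\sum_{k\geq 1}k^{-2} = \pi^{2}/6$, a termwise expansion yields $g(t) = -\sum_{j\geq 2}\zeta(2j)\,t^{2j}/(j\pi^{2j}) \leq 0$ throughout $|t|<\pi$, and in particular on $[-t_{0},t_{0}]$ since $t_{0} < \pi$. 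This is the main obstacle: $t_{0}$ must be small enough for the Gaussian comparison to survive yet large enough to produce the constant $\sqrt{5}/6$ that appears in $C(p)$.

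Raising this bound to the $p$-th power and enlarging the Gaussian integral to all of $\mathbb{R}$ gives
\[
\frac{1}{\pi}\int_{-t_{0}}^{t_{0}}\left(\frac{\sin t}{t}\right)^{2p}dt \leq \frac{1}{\pi}\int_{-\infty}^{\infty}e^{-pt^{2}/3}\,dt = \frac{\sqrt{3/\pi}}{\sqrt{p}},
\]
while the trivial estimate $|\sin t/t|\leq 1/|t|$ disposes of the tail:
\[
\frac{1}{\pi}\int_{|t|>t_{0}}\left(\frac{\sin t}{t}\right)^{2p}dt \leq \frac{2}{\pi}\int_{t_{0}}^{\infty}t^{-2p}\,dt = \frac{2\,(\sqrt{5}/6)^{2p-1}}{\pi(2p-1)}.
\]
Summing the two and factoring out $\sqrt{3/\pi}/\sqrt{p}$ reproduces exactly the bracket $1 + (\sqrt{5}/6)^{2p-1}/\bigl(\sqrt{3\pi}\,(\sqrt{p}-1/(2\sqrt{p}))\bigr)$ that defines $C(p)$ for $p>p_{0}$; for $1\leq p\leq p_{0}$, Ball's original bound $I(p)\leq 1/\sqrt{p}$ coincides with $C(p)\sqrt{3/\pi}/\sqrt{p}$ because $C(p) = \sqrt{\pi/3}$ there. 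This yields the first inequality of (\ref{ineq theorem}).

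The second inequality $C(p)\sqrt{3/\pi}/\sqrt{p}\leq 1/\sqrt{p}$ is just $C(p)\leq \sqrt{\pi/3}$, an equality on $[1,p_{0}]$ by construction. For $p>p_{0}$ it follows from the strict monotone decrease of $p\mapsto(\sqrt{5}/6)^{2p-1}/(\sqrt{p}-1/(2\sqrt{p}))$ past $p_{0}$, together with the direct check $C(p_{0}^{+}) = 1 + (1-\sqrt{3/\pi})\pi/\sqrt{3\pi} = \sqrt{\pi/3}$. Equality between the first two terms of (\ref{ineq theorem}) at $p=1$ is immediate from $I(1) = 1 = \sqrt{\pi/3}\cdot\sqrt{3/\pi}$; for $p > 1$ strict inequality comes from Ball's strict inequality on $(1,p_{0}]$ and, for $p > p_{0}$, from the strictness of the core bound $(\sin t/t)^{2} < e^{-t^{2}/3}$ (valid on $(0,t_{0}]$) together with the strictly positive mass added by enlarging the Gaussian integral to $\mathbb{R}$.

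For the asymptotic, the combination step immediately gives $\limsup_{p\to\infty}\sqrt{p}\,I(p)/\sqrt{3/\pi}\leq \lim_{p\to\infty}C(p) = 1$, since the tail correction decays exponentially. The matching lower bound is a Laplace argument: given $\varepsilon > 0$, use $\log(\sin t/t) = -t^{2}/6 + O(t^{4})$ to choose $\delta > 0$ with $(\sin t/t)^{2}\geq e^{-(1+\varepsilon)t^{2}/3}$ on $[-\delta,\delta]$; then
\[
\pi\,I(p) \geq \int_{-\delta}^{\delta}e^{-(1+\varepsilon)pt^{2}/3}\,dt = \sqrt{\frac{3\pi}{(1+\varepsilon)p}} - O(e^{-c\,p}),
\]
so $\liminf_{p\to\infty}\sqrt{p}\,I(p)/\sqrt{3/\pi}\geq 1/\sqrt{1+\varepsilon}$. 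Sending $\varepsilon\to 0$ completes the proof.
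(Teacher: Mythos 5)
Your proof is correct. For the inequality \eqref{ineq theorem} your route is essentially the paper's: both arguments split the integral at $6/\sqrt{5}$, bound the tail by $\frac{2}{\pi}\int_{6/\sqrt{5}}^{\infty}t^{-2p}\,dt=\frac{1}{\pi}\frac{(\sqrt{5}/6)^{2p-1}}{p-1/2}$, bound the core by $\sqrt{3/\pi}/\sqrt{p}$, and fall back on Ball's original bound on $[1,p_0]$ where the combined bracket exceeds $\sqrt{\pi/3}$. The only difference there is that the paper simply cites Ball for the core estimate, whereas you re-derive it from the Euler-product inequality $(\sin t/t)^2\le e^{-t^2/3}$ on $(-\pi,\pi)$ (noting $6/\sqrt{5}<\pi$), which makes the argument self-contained; your algebra reproducing the bracket $1+\frac{1}{\sqrt{3\pi}}\frac{(\sqrt{5}/6)^{2p-1}}{\sqrt{p}-1/(2\sqrt{p})}$ and the verification that it equals $\sqrt{\pi/3}$ exactly at $p_0$ both check out. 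Where you genuinely diverge is the limit statement: the paper deduces it from its Theorem 2, which uses Plancherel to identify $\frac{1}{\pi}\int_{-\infty}^{\infty}(\sin^2 t/t^2)^n\,dt$ with $\beta^{2n}(0)$ for integer $n$, invokes the Unser--Aldroubi--Eden Gaussian asymptotics for the symmetric $B$-splines, and then interpolates to real $p$ by monotonicity. Your matching lower bound via the Laplace method --- choosing $\delta$ so that $(\sin t/t)^2\ge e^{-(1+\varepsilon)t^2/3}$ on $[-\delta,\delta]$ and discarding an exponentially small remainder --- is more elementary, handles non-integer $p$ directly without the sandwiching step, and avoids both the $B$-spline machinery and the external reference; what it does not give you is the structural identity with $\beta^{2n}(0)$, which is the real content of the paper's Theorem 2 and is of independent interest. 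Both arguments are sound.
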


%==============================================================================
%==============Symmetric B-splines and the integral $\int_{-\infty}^{\infty} \left(\frac{\sin^2 t}{t^2}\right)dt$=====
%==============================================================================
\section{Symmetric B-splines and the integral $\displaystyle{\int_{-\infty}^{\infty} \left(\frac{\sin^2 t}{t^2}\right)^pdt}$}

The symmetric $B$-splines, $\beta^n$, are defined inductively by
\begin{align*}
\beta^0(x):=\chi_{[-\frac 12, \frac 12]}(x) \quad \mbox{and} \quad \beta^n(x):=\int_0^1 \beta^{n-1}(x-y)
dy,
\end{align*}
$n=1,2,...$

Using known properties of these $B$-splines we obtain an asymptotic formula for our integral as $p\longrightarrow \infty$, namely

\begin{theorem}
\begin{equation}\label{lim theorem}
\frac{1}{\pi}\int_{-\infty}^{\infty} \left(\frac{\sin^2 t}{t^2}\right)^pdt \sim \frac{\sqrt{3/\pi}}{\sqrt p},  \quad \mbox{ as } \quad p\longrightarrow \infty.
\end{equation}
\end{theorem}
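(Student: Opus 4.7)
The plan is to recognize the integral as the value at the origin of a symmetric B-spline density, and then invoke the local central limit theorem. First I would compute the Fourier transform: from $\widehat{\beta^0}(\xi)=\tfrac{\sin(\xi/2)}{\xi/2}$ and the convolution rule, together with the symmetry of $\beta^n$, one gets
\begin{equation*}
\widehat{\beta^n}(\xi)=\left(\frac{\sin(\xi/2)}{\xi/2}\right)^{n+1}.
\end{equation*}
Applying the Fourier inversion formula at $x=0$ and changing variables $t=\xi/2$, one finds, for any positive integer $p$ (with $n=2p-1$),
\begin{equation*}
\beta^{2p-1}(0)=\frac{1}{2\pi}\int_{-\infty}^{\infty}\left(\frac{\sin(\xi/2)}{\xi/2}\right)^{2p}d\xi=\frac{1}{\pi}\int_{-\infty}^{\infty}\left(\frac{\sin^2 t}{t^2}\right)^p dt.
\end{equation*}

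Next I would use the probabilistic interpretation of $\beta^n$ as the probability density of the sum $S_{n+1}=X_1+\cdots+X_{n+1}$ of $n+1$ i.i.d.\ random variables uniformly distributed on $[-\tfrac12,\tfrac12]$, each having mean $0$ and variance $\tfrac{1}{12}$. The local central limit theorem (which is elementary here because the characteristic function $\widehat{\beta^n}$ is in $L^1$ for $n\geq 1$ and decays polynomially) gives uniform convergence of the rescaled density to the standard normal density, so in particular
\begin{equation*}
\sqrt{\tfrac{n+1}{12}}\,\beta^n(0)\longrightarrow\frac{1}{\sqrt{2\pi}},\qquad \text{i.e.}\qquad \beta^n(0)\sim\sqrt{\frac{6}{\pi(n+1)}}.
\end{equation*}
Specialising $n=2p-1$ yields the asymptotic $\tfrac{1}{\pi}\int_{-\infty}^{\infty}(\sin^2 t/t^2)^p\,dt\sim\sqrt{3/(\pi p)}=\sqrt{3/\pi}/\sqrt{p}$ along integer values of $p$.

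To finish for real $p\to\infty$, I would exploit that $|\sin t/t|\leq 1$, so the map $p\mapsto I(p):=\tfrac{1}{\pi}\int (\sin t/t)^{2p}\,dt$ is monotonically non-increasing. Sandwiching
\begin{equation*}
\sqrt{p/\lceil p\rceil}\,\cdot\sqrt{\lceil p\rceil}\,I(\lceil p\rceil)\;\leq\;\sqrt{p}\,I(p)\;\leq\;\sqrt{p/\lfloor p\rfloor}\,\cdot\sqrt{\lfloor p\rfloor}\,I(\lfloor p\rfloor),
\end{equation*}
the integer-$p$ asymptotic forces both outer quantities to $\sqrt{3/\pi}$, giving the full conclusion.

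The main obstacle is justifying the local CLT at the single point $x=0$ with the correct constant. The cleanest route is purely Fourier-analytic and mirrors Laplace's method: from the integral representation of $\beta^n(0)$ one splits the integration into a small neighbourhood of the origin, where $(\sin(\xi/2)/(\xi/2))^{n+1}$ is well approximated by $e^{-(n+1)\xi^2/24}$ via the expansion $\log(\sin u/u)=-u^2/6+O(u^4)$, and a complementary region, where $|\sin(\xi/2)/(\xi/2)|\leq c<1$ makes the contribution exponentially small in $n$. Rescaling $\xi=u\sqrt{24/(n+1)}$ in the small-$\xi$ piece reduces the leading order to a standard Gaussian integral and produces the constant $\sqrt{6/(\pi(n+1))}$. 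Everything else is bookkeeping.
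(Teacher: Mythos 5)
Your proposal is correct, and it follows the same overarching strategy as the paper (interpret the integral as the value at the origin of a high-order symmetric B-spline, apply Gaussian asymptotics, then pass from integer to real $p$ by monotonicity), but it differs in two substantive ways. First, where the paper reaches the B-spline via Plancherel, writing $\frac{1}{\pi}\int(\sin^2 t/t^2)^n\,dt=\int|\beta^n|^2$ and then invoking the identity $\int(\beta^n)^2=\beta^{2n}(0)$ from Chui's book, you go directly by Fourier inversion at $x=0$, identifying the integral as $\beta^{2p-1}(0)$ in one step; this is cleaner and avoids one cited identity (and, incidentally, your indexing $\widehat{\beta^n}=(\sin(\xi/2)/(\xi/2))^{n+1}$ is the consistent one for the stated recursive definition, whereas the paper's $\widehat{\beta^n}=(\sin\pi t/\pi t)^n$ is off by one from its own definition, harmlessly for the asymptotics). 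Second, and more importantly, the paper simply cites Theorem 1 of Unser--Aldroubi--Eden for the Gaussian limit $\beta^{2n}(0)\sim\sqrt{6/(\pi(2n+1))}$, while you supply a proof: the local central limit theorem for sums of uniforms, justified by the Fourier-analytic splitting $\log(\sin u/u)=-u^2/6+O(u^4)$ near the origin plus the uniform bound $\sup_{|u|\ge\delta}|\sin u/u|<1$ off it. This makes the argument self-contained. It is worth noting that your Laplace-method sketch actually renders the B-spline scaffolding optional: the same splitting applied directly to $\int_{-\infty}^{\infty}(\sin t/t)^{2p}\,dt$ works verbatim for all real $p\to\infty$, yielding $\sqrt{3\pi/p}$ at leading order and thereby dispensing with both the restriction to integer $p$ and the monotonicity sandwich. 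The sandwich step as you state it is nevertheless correct, since $\sqrt{p/\lceil p\rceil}\to 1$ and $\sqrt{p/\lfloor p\rfloor}\to 1$.
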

\begin{proof}
Suppose, to begin with, that $p \in \Z_+$, say $p=n$. Now,
\begin{align*}
\widehat{\beta^n}(t):=\int_{-\infty}^{\infty}\beta^n(s)e^{-2 \pi i t s}ds=\left(\frac{\sin \pi t}{\pi t}\right)^n,
\end{align*}
so Plancherel's theorem yields
\begin{align*}
\frac{1}{\pi}\int_{-\infty}^{\infty} \left(\frac{\sin^2 t}{t^2}\right)^ndt= \int_{- \infty}^{\infty}\left(\frac{\sin t}{t}\right)^{2n}dt=\int_{-\infty}^{\infty}|\beta^n(s)|^2 ds.
\end{align*}
Further, by [2, p. 89],
\begin{align*}
\int_{-\infty}^{\infty}{\beta^n}(s)^2 ds=\int_{-\infty}^{\infty}\beta_n(s)\beta_n(1)^n ds=\beta^{2n}(0).
\end{align*}
Again, according to Theorem 1 in \cite{Unser:1992},
\begin{align*}
{\beta^{2n}}\left(\sqrt{\frac{2n+1}{12}}x\right) \sim \sqrt{\frac{6}{\pi(2n+1)}} \exp(-x^2/2),
\end{align*}
so in particular,
\begin{align*}
\beta^{2n}(0) \sim \sqrt{\frac{6}{\pi(2n+1)}} \sim \frac{\sqrt{3/ \pi}}{\sqrt{\pi}}, \quad \mbox{as} \quad n\longrightarrow \infty.
\end{align*}

Finally, $\displaystyle{\int_{-\infty}^{\infty}\left(\frac{\sin^2 t}{t^2}\right)^p dt}$ is a decreasing function of $p$, so one has
\begin{equation}\label{eq 4}
\frac{1}{\pi}\int_{-\infty}^{\infty}\left(\frac{\sin^2 t}{t^2}\right)^{[p]+1} dt\leq \frac{1}{\pi}\int_{-\infty}^{\infty}\left(\frac{\sin^2 t}{t^2}\right)^{p} dt\leq \frac{1}{\pi}\int_{-\infty}^{\infty}\left(\frac{\sin^2 t}{t^2}\right)^{[p]}dt
\end{equation}
and hence (\ref{lim theorem}), since the extreme terms in (\ref{eq 4}) are both asymptotically equal to $\displaystyle{\frac{\sqrt{3/ \pi}}{\sqrt{p}}}$.
\end{proof}
\section{Proof of Theorem 1}
The following estimate was obtained by Ball in [1]:
\begin{align*}
\frac{1}{\pi}\int_{-6/\sqrt{5}}^{6/\sqrt{5}}\left(\frac{\sin^2 t}{t^2}\right)^p dt \leq \frac{\sqrt{3/ \pi}}{\sqrt{p}}.
\end{align*}
Also, we have
\begin{align*}
\frac{1}{\pi}\int_{|t|\geq 6/\sqrt{5}}^{\infty}\left(\frac{\sin^2 t}{t^2}\right)^p dt \leq \frac{2}{\pi}\int_{ 6/\sqrt{5}}^{\infty} t^{-2p} dt= \frac{1}{\pi}\frac{(\sqrt{5}/6)^{2p-1}}{p-\frac12}.
\end{align*}
Altogether, then,
\begin{align*}
\frac{1}{\pi}\int_{-\infty}^{\infty}\left(\frac{\sin^2 t}{t^2}\right)^p dt \leq \left(1+ \frac{1}{\sqrt{3 \pi}} \frac{(\sqrt 5/ 6)^{2p-1}}{\sqrt p- 1/2 \sqrt p}\right) \frac{\sqrt{3/ \pi}}{\sqrt {p}},
\end{align*}
with
\begin{align*}
1+ \frac{1}{\sqrt{3 \pi}} \frac{(\sqrt 5/ 6)^{2p-1}}{\sqrt p- 1/2 \sqrt p}\leq \sqrt{\pi/3}, \quad \mbox{ if and only if } \quad p\geq p_0.
\end{align*}
Finally,
\begin{align*}
\lim_{p\longrightarrow \infty}\frac{1}{\pi}\int_{-\infty}^{\infty} \left(\frac{\sin^2 t}{t^2}\right)^pdt/ \frac{\sqrt{3/\pi}}{\sqrt p}=1,
\end{align*}
in view of Theorem 2. $\hspace{12cm}\Box$

%=================================================================================================
%====================================================BIBLIOGRAPHY================================
%==================================================================================================

% ======================= END DOCUMENT =====================
\end{document}